\def\eqdefa{\buildrel\hbox{\footnotesize def}\over =}
\newcommand{\beq}{\begin{eqnarray}}
\newcommand{\eeq}{\end{eqnarray}}
\newcommand{\bq}{\begin{equation}}
\newcommand{\eq}{\end{equation}}
\newcommand{\beqn}{\begin{eqnarray*}}
\newcommand{\eeqn}{\end{eqnarray*}}
\DeclareMathSymbol{\subsetneqq}{\mathbin}{AMSb}{36}
\newcommand{\C}{\mathbb{C}}
\newcommand{\R}{\mathbb{R}}
\newcommand{\N}{\mathbb{N}}
\newcommand{\dint}{\displaystyle\int}
\newtheorem{th1}{{\bf Theorem}}[section]
\newtheorem{thm}[th1]{{\bf Theorem}}
\newtheorem{lem}[th1]{{\bf Lemma}}
\newtheorem{prop}[th1]{{\bf Proposition}}
\theoremstyle{remark}
\newtheorem{rem}[th1]{\bf Remark}
\theoremstyle{definition}
\newtheorem{defi}[th1]{\bf Definition}
\author{Mohamed Majdoub}
\address{Universit\'e de Tunis El Manar,
Facult\'e des Sciences de Tunis, D\'epartement de Math\'ematiquess, 2092, Tunis, Tunisie}
\email{\sl mohamed.majdoub@fst.rnu.tn}
\thanks{{\sf M. Majdoub} is grateful to the Laboratory of
PDE and Applications at the Faculty of Sciences of Tunis}
\author{Nader Masmoudi}
\address{Courant Institute of Mathematical Sciences, New York, NY}
\email{\sl masmoudi@courant.nyu.edu}
\thanks{{\sf N. Masmoudi} is partially supported by an NSF Grant DMS-1211806}
\title[On Uniqueness for supercritical ...]
{On Uniqueness for supercritical nonlinear wave and Schr\"odinger equations}
\date{\today}
\begin{document}


\subjclass[2000]{35L05, 35Q55, 74G30, 35D30, 35A09}
\keywords{Nonlinear wave equation, nonlinear Schr\"odinger equation, supercritical growth, weak-strong uniqueness.}

\maketitle


\section{Introduction}

In a recent paper \cite{Struwe-IMRN}, Struwe considered the Cauchy problem for a class of nonlinear wave and Schr\"odinger equations.
 Under some assumptions on the nonlinearities, it was shown that uniqueness of classical solutions can be obtained in the
 much larger class of distribution solutions satisfying the energy inequality. As pointed out in \cite{Struwe-IMRN}, the conditions
 on the nonlinearities are satisfied for any polynomial growth but they fail to hold for higher growth (for example ${\rm e}^{u^2}$).
 Our aim here is to improve Struwe's result by showing that uniqueness holds for more general nonlinearities including higher
 growth or oscillations.\\

We briefly recall known results on the uniqueness of solution for some evolutionary PDEs. The strong solution is often constructed
within the framework of $C([0, T]; X)$ and an auxiliary space related to a priori estimates (Strichartz spaces, smoothing spaces, ...) using fixed point argument.
This, of course, yields uniqueness in the space where the fixed point argument is performed. This kind of uniqueness is called
conditional uniqueness. The uniqueness without an auxiliary space is called the unconditional uniqueness.
We refer among others to \cite{MP, P} in the case of wave equation, to \cite{Tsut} in the case of Schr\"odinger equation, to \cite{FK} in the case of Navier-Stokes system, to \cite{BP} in the case of Benjamin-Ono equation, to \cite{MK1, MK2} in the case of Zakharov systems and Maxwell-Dirac equation, to \cite{Pecher} in the case of Klein-Gordon-Schr\"odinger system and to \cite{Selberg} in the case of Dirac-Klein-Gordon equations.\\

In many cases we can only construct global weak solutions by using compactness arguments, but their uniqueness is not known.
The weak-strong uniqueness investigation is an attempt to reconcile the weak and strong viewpoints of solutions.
More precisely, this investigation is to show that any weak solution agrees with the strong solution sharing the same
 initial data if it exists. See, for instance, \cite{Stru2, Struwe-IMRN} for wave and Schr\"odinger equations, \cite{Chemin, Germ3, Leray, May} for the Navier-Stokes system, \cite{Germ1} for nonhomogeneous Navier-Stokes system and \cite{Germ2} for the isentropic compressible Navier-Stokes system.\\

The rest of the paper is organized as follows. In the next section, we consider nonlinear wave equation with two
kinds of supercritical nonlinearities (defocusing or oscillating). In both cases, we prove weak-strong uniqueness.
Section 3 is devoted to nonlinear Schr\"odinger equation. We also obtain weak-strong uniqueness.
Finally, in the Appendix we recall the global existence of weak solutions for nonlinear wave equation
assuming that the nonlinearity oscillates.


\section{Nonlinear Wave Equations}
Consider the Cauchy problem
\begin{equation}
\label{wave1}
\partial_t^2u-\Delta\,u+f(u)=0\quad\mbox{on}\quad \R_t\times\R_x^d,\;\;d\geq 2,
\end{equation}
with data
\begin{equation}
\label{CD}
\left(u(0,\cdot),u_t(0,\cdot)\right)=\left(u_0,u_1\right),
\end{equation}
where $f=F'$ for some smooth function $F:\R\to\R$ satisfying $F(0)=F'(0)=F^{''}(0)=0$. Moreover, we request either ({\sf defocusing case})
\bq
\label{H1}
u\,f(u)\geq 0\;\mbox{for all}\;u\in\R,
\eq

or ({\sf oscillating case})

\bq
\label{H2}
|f(u)|\,\leq\,C|u|^q\;\;\mbox{for some}\;\; q<2^*:=\frac{2d}{d-2}\;\;\;(2^*=+\infty\;\;\mbox{if}\;\;d=2),
\eq

{\sf and}

\bq
\label{H21}
F(u)\geq -C|u|^2\,.
\eq

The assumption \eqref{H1} allows a wide range of supercritical defocusing growths. A typical example
satisfying \eqref{H1} is $F(u)={\rm e}^{u^{2m}}-1$ where $m\geq 1$ is an integer. The hypotheses \eqref{H2}-\eqref{H21} allow slightly
oscillating supercritical powers. A good prototype is $F(u)= \sin\left(u|u|^q\right)$ where $1\leq q<2^*$.
Note that classical hypotheses (see \cite{GWE, Strauss}) do not allow these types of functions.\\

Multiplying \eqref{wave1} by $u_t$ and integrating over $\R^d$, one easily finds the energy identity
\begin{equation}
\label{energy}
E(u,t)=\int_{\R^d}\,\Big\{\frac{1}{2}\,|{\mathbf D}u(t,x)|^2+F(u(t,x))\Big\}\,dx=E(u,0),
\end{equation}
where ${\mathbf D}u=(u_t,\nabla u)$ is the space-time gradient.

We begin by defining what we call {\it classical}/{\it weak} solution of the Cauchy problem \eqref{wave1}-\eqref{CD}.

\begin{defi}\quad\\
\vspace{-0.5cm}
\label{solution}
\begin{itemize}
\item A function $u$ is a {\it classical} solution of \eqref{wave1}-\eqref{CD} on a time interval $I$ containing $0$
 if $u\in C^2(I\times\R^d)$ and solves \eqref{wave1}-\eqref{CD} in the classical sense.
\item A function $u$ is a {\it weak} solution of \eqref{wave1}-\eqref{CD} if $u\in L^2(\R\times\R^2)$,
${\mathbf D}u\in L^\infty(\R, L^2)$, $f(u)\in L^1(\R\times\R^d)$, $F(u)\in L^\infty(\R; L^1(\R^d))$, $u$
 solves \eqref{wave1}-\eqref{CD} in distribution sense, namely
\begin{eqnarray}
\label{weaksense}
\nonumber\int_0^{+\infty}\int_{\R^d}\,u\Box\varphi\,dx\,dt&+&\int_0^{+\infty}\int_{\R\R^d}\,f(u)\varphi\,dx\,dt=
-\int_{\R^d}\,u_0(x)\partial_t\varphi(0,x)\,dx\\&+&
\int_{\R^d}\,u_1(x)\varphi(0,x)\,dx,\;\;\;\forall\;\;\;\varphi\in{\mathcal D}(\R\times\R^d),
\end{eqnarray}
and the energy inequality \eqref{energy1} holds. Here $\Box=\partial_t^2-\Delta_x$ is the d'Alembertian operator.
\end{itemize}
\end{defi}
As explained in \cite{GWE}, for any weak solution $u$ in the sense of Definition \ref{solution}
the map $t\longmapsto {\mathbf D}u(t)\in L^2(\R^d)$ is weakly continuous.\\

The Cauchy problem \eqref{wave1}-\eqref{CD} has a long history. First, when the space dimension $d\geq 3$,
 the defocusing semilinear wave equation with power $p$ reads
\begin{equation}
\label{N} \partial_t^2u-\Delta u+|u|^{p-1}u=0,
\end{equation}
where $p>1$. This problem has been widely
investigated and there is a large literature dealing with the
well-posedness theory of (\ref{N}) in the scale of the Sobolev
spaces $H^s$. Second, for the global solvability in the energy space
$\dot{H}^1\times L^2$, there are mainly three cases.
In the subcritical case $p<p^*:=\frac{d+2}{d-2}$, Ginibre and Velo \cite{GV1} finally settled
 global well-posedness in the energy space, by using the Strichartz estimate, nonlinear estimates in Besov space,
 and energy conservation.

The critical case $p=p^*$ is more delicate, due to possibility of energy concentration. Struwe \cite{Str88} proved
global existence of radially symmetric regular solutions.
 Then Grillakis \cite{Gr1, Gr2} extended this result to non-radial data. In the energy space, Ginibre, Soffer and Velo \cite{GSV}
 proved global well-posedness in the radial case, where the Morawetz estimate effectively precludes concentration.
  The case of general data was solved by Shatah-Struwe \cite{SS2}, and Kapitanski \cite{K}. See also Ibrahim-Majdoub \cite{IM}
  for variable metrics. Note that uniqueness in the energy space is not yet fully solved.
   We refer to \cite{P} for $d\ge 4$, \cite{Stru2,MP} for partial results in $d=3$ and to \cite{Struwe-IMRN} for the case of
   classical solutions.

The supercritical case $p>p^*$ is even harder, and the global well-posedness problem for general data remains open,
except for the existence of global weak solutions \cite{Strauss}, local wellposedness in higher Sobolev spaces ($H^s$
 with $s\ge d/2-2/p>1$) as well as global wellposedness with scattering for small data (see e.g.~\cite{LiSo,Wang}), and
some negative results concerning non-uniform continuity of the solution map \cite{CCT, Le}. See also \cite{Leb} for a
result concerning a loss of regularity and \cite{tao} for a result about global regularity for a logarithmically energy-supercritical
 wave equation in the radial case. See also \cite{BT08} about random data Cauchy theory for supercritical wave equations.

In dimension two, $H^1$-critical nonlinearities are of
exponential type {\footnote{In fact, the critical nonlinearity is of
exponential type in any dimension $d$ with respect to $H^{d/2}$
norm.}}, since every power is $H^1$-subcritical. On the one hand, in a recent work \cite{IMM1}, the case
 $F(u)=\frac{1}{8\pi}\left({\rm e}^{4\pi u^2}-1\right)$ was investigated and an energy threshold was proposed.
 Local strong well-posedness was shown under the size restriction $\|\nabla u_0\|_{L^2}<1$ and the global
 well-posedness was obtained when the energy is below or equal to the energy threshold.
 Recently, Struwe \cite{Struwe09, Struwe2D} has constructed global smooth solutions with smooth data of arbitrary size.
 On the other hand, the ill posedness results of \cite{CCT, Leb}
show the non uniform continuity of the solution map (or sometimes its non continuity at the zero data).
 In the 2D exponential case the non uniform continuity of the solution map was shown in \cite{IMM2, APDE}.\\

Assuming \eqref{H1} or \eqref{H2}-\eqref{H21}, we can construct a global weak solution with finite energy data
such that
\begin{equation}
\label{energy1}
E(u(t))\leq E(u(0)),\;\;\; \forall\;\;t\in\R\,.
\end{equation}
This was carried out by Strauss \cite{Strauss} in the case \eqref{H1} (see also \cite{Lions}, \cite{Segal}, \cite{GWE}).
 The case \eqref{H2}-\eqref{H21} can be done in a similar way. For the convenience of the reader, we give the proof in the Appendix.
\begin{rem}
To our knowledge, this type of oscillating supercritical nonlinearities (satisfying \eqref{H2}-\eqref{H21}) are not
studied in the literature. It is an interesting open question to prove the {\it global} well-posedness for such nonlinearities.
\end{rem}
Now we are ready to state our main result concerning the Cauchy problem \eqref{wave1}-\eqref{CD}.
\begin{thm}
\label{main-wave}
Suppose $u\in {\mathcal C}^2([-T,T]\times\R^d)$ is a classical solution to \eqref{wave1}-\eqref{CD} with Cauchy data in
${\mathcal D}(\R^d)^2$.
Also let $v$ be a weak solution to \eqref{wave1}-\eqref{CD}. Then $u=v$ on $[-T,T]$.
\end{thm}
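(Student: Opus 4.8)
The plan is to run an energy comparison between $u$ and $v$ arranged so that the (possibly supercritical) nonlinearity is never differentiated against the weak solution. First I would exploit that $u$ is classical with data in ${\mathcal D}(\R^d)^2$: since $f$ is smooth the equation bootstraps $u$ to $C^\infty$, and finite propagation speed confines $u(t,\cdot)$ to a fixed ball $B$ for $t\in[-T,T]$, so that $u,\mathbf{D}u,f(u),f'(u)$ are bounded and compactly supported on $[-T,T]\times B$. By the reflection $t\mapsto -t$ it suffices to treat $[0,T]$. Set $w=v-u$, so $w(0)=0$ and $\mathbf{D}w(0)=0$, and introduce $\mathcal E(t)=\tfrac12\int_{\R^d}|\mathbf{D}w(t)|^2\,dx$; I record the elementary bound $\|w(t)\|_{L^2}\le\int_0^t\|w_s\|_{L^2}\,ds\le\int_0^t\sqrt{2\mathcal E(s)}\,ds$ following from $w(0)=0$. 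The goal is $\mathcal E\equiv 0$, which with $w(0)=0$ forces $u=v$.

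The three inputs are the energy identity $E(u,t)=E(u,0)$, the energy inequality \eqref{energy1} for $v$ (with $E(v,0)=E(u,0)$ by equal data), and a cross-energy identity. The last is obtained by inserting $\varphi=u_t$ into the distributional formulation \eqref{weaksense} — legitimate after a harmless cut-off near $s=0,t$ since $u_t$ is smooth and compactly supported — and combining with $\Box u=-f(u)$, giving
\[
\int_{\R^d}\mathbf{D}u(t)\cdot\mathbf{D}v(t)\,dx=\int_{\R^d}|\mathbf{D}u(0)|^2\,dx-\int_0^t\!\!\int_{\R^d}\big(f(u)\,v_s+f(v)\,u_s\big)\,dx\,ds .
\]
Using $u_t$ rather than $v_t$ is decisive: it never forms the ill-defined $\int f(v)v_t$ (the exact place where energy may drop in \eqref{energy1}), and every occurrence of $f(v)$ is paired with the smooth bounded $u_s$, hence integrable since $f(v)\in L^1$.

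Expanding $|\mathbf{D}w|^2=|\mathbf{D}v|^2-2\,\mathbf{D}u\cdot\mathbf{D}v+|\mathbf{D}u|^2$ and inserting the three relations yields
\[
\mathcal E(t)\le 2\!\int_{\R^d}\!F(u_0)\,dx-\!\int_{\R^d}\!F(v(t))\,dx-\!\int_{\R^d}\!F(u(t))\,dx+\!\int_0^t\!\!\int_{\R^d}\!\big(f(u)v_s+f(v)u_s\big)\,dx\,ds .
\]
The awkward $\int F(v(t))$ (which \eqref{energy1} bounds only from above) is eliminated by passing to the relative energy $\mathcal R(t)=\mathcal E(t)+\int_{\R^d}\big[F(v)-F(u)-f(u)w\big]\,dx$: the $\pm\int F(v)$ cancel, and after integrating $f(u)u_s=\partial_s F(u)$ exactly and integrating $\int_0^t\!\int f(u)w_s$ by parts in time (using $w(0)=0$) I expect to reach
\[
\mathcal R(t)\le\int_0^t\!\!\int_{\R^d}\big(f(v)-f(u)\big)\,u_s\,dx\,ds-\int_0^t\!\!\int_{\R^d}f'(u)\,u_s\,w\,dx\,ds ,
\]
where the second integral is harmless, being $\le C\int_0^t\|w(s)\|_{L^2}\,ds$ by the boundedness of $f'(u)u_s$.

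The heart of the matter is the single term $\int_0^t\!\int(f(v)-f(u))u_s$, the only place the supercritical size of $f$ can enter. Under the oscillating hypothesis \eqref{H2} the subcritical bound $|f(v)-f(u)|\le C(|u|^{q-1}+|v|^{q-1})|w|$ with $q<2^*$, together with $\mathbf{D}v\in L^\infty_tL^2$ and $\dot H^1\hookrightarrow L^{2^*}$, controls it by $C\int_0^t(\mathcal E(s)+\|w(s)\|_{L^2}^2)\,ds$, while \eqref{H21} bounds $\int F(v)$ from below so that $\mathcal R$ is coercive; Gronwall with $\mathcal R(0)=0$ then closes the oscillating case. The defocusing case \eqref{H1} carries no growth restriction and is the genuine obstacle: here I would keep $f(v)$ paired with the smooth $u_s$ and use $f(v)\in L^1$ together with the sign $uf(u)\ge 0$ (hence $F\ge 0$), trading $f(v)$ for $v$ against the smooth coefficient $f'(u)u_s$ through a second use of \eqref{weaksense} with test function $\partial_s u$ (since $\Box u_s=-f'(u)u_s$); controlling the resulting $s=t$ boundary terms without losing a constant is the delicate step, and is precisely where the argument must improve on \cite{Struwe-IMRN}. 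Once $\int_0^t\!\int(f(v)-f(u))u_s$ is dominated by $C\int_0^t\mathcal R(s)\,ds$, Gronwall and $\mathcal R(0)=0$ give $\mathcal R\equiv 0$, hence $\mathbf{D}w\equiv 0$, and with $w(0)=0$ we conclude $u=v$ on $[-T,T]$.
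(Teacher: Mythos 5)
Your overall scheme---relative energy, a cross term produced by pairing the weak formulation with $\partial_t u$, then Gronwall---is exactly the paper's strategy: your $\mathcal{R}(t)$ is the paper's $J(t)$, and your reduction to controlling $\int_0^t\!\int (f(v)-f(u))\,u_s$ reproduces the paper's inequality \eqref{main2}. Your oscillating case is also essentially the paper's, except for one slip: the pointwise bound $|f(v)-f(u)|\le C\bigl(|u|^{q-1}+|v|^{q-1}\bigr)|w|$ does \emph{not} follow from \eqref{H2}, which bounds $f$ but says nothing about $f'$. The correct (and fixable) route is the paper's \eqref{H22}: Taylor expansion on the compact range of $u$ when $|w|\le 1$, and the growth bound $|f(u+w)|\le C(R)|w|^{2^*}$ when $|w|\ge 1$.

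The genuine gap is the defocusing case \eqref{H1}, which you yourself leave conditional (``once the key term is dominated by $C\int_0^t\mathcal{R}$\dots''). Your proposed mechanism---testing \eqref{weaksense} a second time with $\varphi=\partial_s u$, using $\Box u_s=-f'(u)u_s$---trades $\int_0^t\!\int f(v)u_s$ for $\int_0^t\!\int v f'(u)u_s$ plus boundary terms at $s=t$ of the form $\int_{\R^d}\bigl(w_s(t)u_s(t)-w(t)u_{ss}(t)\bigr)dx$. The second piece is harmless, but $\int w_s(t)u_s(t)\,dx$ is \emph{linear} in $\mathbf{D}w(t)$ with no smallness: it is $O(\mathcal{E}(t)^{1/2})$ rather than $O(\mathcal{E}(t))$, so the Gronwall loop yields only $\mathcal{E}\le C$, never $\mathcal{E}\equiv 0$. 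This defect is structural, since testing against the fixed function $u_s$ can only produce terms linear in $w$. The paper closes the argument with two ingredients your proposal lacks. First, a sign analysis (Lemma \ref{main33}) that actually uses \eqref{H1}: on the set where $w f(u+w)<0$, $f(u+w)$ has the sign of $u+w$, so $w$ and $u+w$ have opposite signs and hence $|w|\le\|u\|_{L^\infty}$; this reduces the cross term to $C\int_0^t\|w\|_{L^2}^2+C\int_0^t\!\int w\bigl(f(u+w)-f(u)\bigr)$. Second, Proposition \ref{weak}, the identity obtained by multiplying the weak equation by truncations $\beta_k(v)\ast\rho_n$ of $v$ itself (justified via monotone convergence, which is precisely where $vf(v)\ge 0$ enters), converting $\int_0^t\!\int w\bigl(f(u+w)-f(u)\bigr)$ into $\int_0^t\!\int\bigl(|\partial_t w|^2-|\nabla w|^2\bigr)+\int \partial_t w(t)\,w(t)\,dx$. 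There the boundary term is \emph{quadratic} in $w$, hence absorbable as $\tfrac{1}{4C}\|\mathbf{D}w(t)\|_{L^2}^2+C\|w(t)\|_{L^2}^2$. Testing with (a truncation of) the weak solution $v$, rather than with $u_s$, is the key idea missing from your proposal.
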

\begin{rem}
For nonlinearities with arbitrary (supercritical) growth, we don't know wether the Cauchy problem
\eqref{wave1}-\eqref{CD} admits or not global smooth solutions.
\end{rem}
The underlying idea of our proof in the case when $f$ satisfies assumption \eqref{H1} consists of the following identity
satisfied by any weak solution.
\begin{prop}
\label{weak}
Assume that the nonlinearity $f$ satisfies \eqref{H1}. Let $v$ be weak
solution to \eqref{wave1}-\eqref{CD}. Then, for any $T>0$,
\begin{equation}
\label{weak1}
\int_0^T\,\int_{\R^d}\,\Big(|\nabla v|^2-|\partial_t v|^2+vf(v)\Big)\,dx\,dt=\int_{\R^d}\,
\Big(\partial_t v(T)v(T)-\partial_t v(0)v(0)\Big)\,dx\,.
\end{equation}
\end{prop}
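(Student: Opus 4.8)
The plan is to obtain \eqref{weak1} by testing the equation \eqref{wave1} against the weak solution $v$ itself, rather than against $\partial_t v$, which is what produces the energy identity \eqref{energy}. Writing $Q_T=[0,T]\times\R^d$, multiplying $\partial_t^2 v-\Delta v+f(v)=0$ by $v$ and integrating over $Q_T$, two integrations by parts give
\[
\int_0^T\!\int_{\R^d} v\,\partial_t^2 v\,dx\,dt=\int_{\R^d}\big[\partial_t v\, v\big]_{t=0}^{t=T}dx-\int_0^T\!\int_{\R^d}|\partial_t v|^2\,dx\,dt,
\]
\[
-\int_0^T\!\int_{\R^d} v\,\Delta v\,dx\,dt=\int_0^T\!\int_{\R^d}|\nabla v|^2\,dx\,dt,
\]
so that collecting the three contributions produces the space--time integral on the left of \eqref{weak1} together with the boundary terms $\int_{\R^d}\partial_t v\,v\,dx$ evaluated at $t=0$ and $t=T$. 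The entire difficulty is to justify this computation when $v$ is only a weak solution.

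Since $v$ is not smooth it is not an admissible test function in \eqref{weaksense}. First I would pass to the first--order weak formulation: using that ${\mathbf D}v\in L^\infty(\R;L^2)$ and that (because $v\in L^2(\R\times\R^d)$ with $\partial_t v\in L^2_{\rm loc}(\R;L^2)$) the map $t\mapsto v(t)\in L^2(\R^d)$ is continuous with $v(0)=u_0$, one integration by parts in \eqref{weaksense} shows that for every $\varphi\in{\mathcal D}(\R\times\R^d)$,
\[
-\int_0^{\infty}\!\int_{\R^d}\partial_t v\,\partial_t\varphi\,dx\,dt+\int_0^{\infty}\!\int_{\R^d}\nabla v\cdot\nabla\varphi\,dx\,dt+\int_0^{\infty}\!\int_{\R^d}f(v)\varphi\,dx\,dt=\int_{\R^d}u_1\,\varphi(0,\cdot)\,dx.
\]
I would then insert $\varphi=\chi_\delta\,v_\varepsilon$, where $v_\varepsilon=v*\rho_\varepsilon$ is a space--time mollification and $\chi_\delta(t)$ is a smooth cut--off equal to $1$ on $[0,T]$, dropping to $0$ on $[T,T+\delta]$ and rising from $0$ on $[-\delta,0]$. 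Letting first $\varepsilon\to0$ and then $\delta\to0$, the term carrying $\chi_\delta'$ concentrates at $t=T$ and produces the boundary contribution there, while the right--hand side $\int_{\R^d}u_1 v_\varepsilon(0)\,dx\to\int_{\R^d}\partial_t v(0)\,v(0)\,dx$ supplies the boundary contribution at $t=0$; the two quadratic terms converge to $-\int_0^T\!\int_{\R^d}|\partial_t v|^2$ and $\int_0^T\!\int_{\R^d}|\nabla v|^2$ by the $L^\infty(\R;L^2)$ bound on ${\mathbf D}v$ together with the weak continuity of $t\mapsto{\mathbf D}v(t)$ recalled after Definition~\ref{solution} (the boundary pairing $\int_{\R^d}\partial_t v(T)v(T)\,dx$ being the limit of a weak--times--strong product). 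Assembling these limits reproduces \eqref{weak1}.

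The main obstacle is the nonlinear term $\int_0^{\infty}\!\int_{\R^d}\chi_\delta f(v)\,v_\varepsilon\,dx\,dt$. The difficulty is that one only controls $f(v)\in L^1(\R\times\R^d)$ while, by Sobolev embedding, $v$ lies merely in $L^\infty(\R;L^{2^*})$; hence $f(v)\,v$ need not be integrable a priori, and since $\|f(v)\|_{L^1}$ provides no domination for the products $f(v)v_\varepsilon$, the passage to the limit cannot be carried out by dominated convergence. This is precisely where the defocusing hypothesis \eqref{H1} is indispensable: as $v_\varepsilon\to v$ almost everywhere and $f$ is continuous, $\chi_\delta f(v)v_\varepsilon\to f(v)v$ almost everywhere, and the sign condition $vf(v)\ge0$ permits a Fatou / monotone--convergence argument that simultaneously shows $vf(v)\in L^1(Q_T)$ and identifies the limit of the nonlinear term. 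Controlling this term with no growth restriction on $f$, using only the sign \eqref{H1}, is the crux of the proof; once it is settled, the identity \eqref{weak1} follows by collecting the limits above.
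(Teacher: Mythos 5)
Your overall strategy --- testing the weak formulation with a regularization of $v$ itself and passing to the limit --- is the same as the paper's, but your proposal defers precisely the step that constitutes the proof, and the mechanism you sketch for it does not work. You propose to handle $\int\!\!\int \chi_\delta\, f(v)\, v_\varepsilon\,dx\,dt$ by ``a Fatou / monotone-convergence argument'' based on the sign condition \eqref{H1}. But \eqref{H1} gives $v(t,x)f(v(t,x))\ge 0$ \emph{pointwise for the same argument}; it gives no sign whatsoever for the product $f(v(t,x))\,v_\varepsilon(t,x)$, because the mollified function $v_\varepsilon$ is an average of $v$ over nearby points and can perfectly well have the opposite sign to $v(t,x)$. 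So Fatou's lemma is not applicable (no nonnegativity, and you have already conceded there is no integrable lower bound), and monotone convergence is not applicable either (the family $f(v)v_\varepsilon$ has no monotonicity in $\varepsilon$). Even if the terms were nonnegative, Fatou would only yield a one-sided inequality, whereas \eqref{weak1} is an identity. Hence the limit $\varepsilon\to 0$ in the nonlinear term is not justified, and this is exactly the ``crux'' your proposal leaves open.

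The missing idea is a \emph{truncation of $v$ before mollification}, which is what the paper does: one tests with $v_{k,n}=\beta_k(v)\ast\rho_n$, where $\beta_k$ is a bounded, odd, nondecreasing $C^1$ function with $\beta_k(s)=s$ for $|s|\le k$ and $|\beta_k|\le \tfrac{3k}{2}$. The point is twofold. First, boundedness of $\beta_k$ gives the domination $|f(v)\,v_{k,n}|\le 2k\,|f(v)|\in L^1(\R^+\times\R^d)$, so the mollification limit $n\to\infty$ in the nonlinear term is handled by dominated convergence --- this is what replaces the domination you lack. Second, since $\beta_k$ is odd and has the same sign as its argument, \eqref{H1} gives $\beta_k(v)f(v)\ge 0$, and $\beta_k(v)f(v)$ increases pointwise to $vf(v)$ as $k\to\infty$; now (and only now) the monotone convergence theorem applies, simultaneously showing $vf(v)\in L^1$ and identifying the limit, while the quadratic terms converge because $\nabla\beta_k(v)=\beta_k'(v)\nabla v$, $\partial_t\beta_k(v)=\beta_k'(v)\partial_t v$ with $\beta_k'\to 1$ boundedly. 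In short: the sign hypothesis can only be exploited \emph{after} the spatial regularization has been removed, and the truncation $\beta_k$ is the device that makes the regularization removable first; without it your two limits cannot be decoupled and the argument collapses at the nonlinear term.
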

\begin{proof}[Proof of Proposition \ref{weak}]
Let $(\rho_n)$ be a mollifier sequence in $\R^d$ and, for $k\in\N$, define the $C^1$ odd function $\beta_k :\R\to \R$ by
\begin{eqnarray*}
 \beta_k(s)&=&\; \left\{
\begin{array}{cllll}s \quad&\mbox{if}&\quad
0\leq s\leq k,\\ s+\frac{(s-k)^2}{2k}\quad
&\mbox{if}&\quad k\leq s\leq 2k ,\\
\frac{3k}{2}\quad&\mbox{if}&\quad s\geq 2k.
\end{array}
\right.
\end{eqnarray*}
Using \eqref{weaksense} with $v_{k,n}\eqdefa\beta_k(v)\ast\rho_n\eqdefa v_k\ast\rho_n$ as test function
\bq
\label{k,n}
\int_0^T\int_{\R^d}\,\left(\nabla v\cdot\nabla v_{k,n}-\partial_t v\,\partial_t v_{k,n}+f(v) \,v_{k,n}\right)\,dx dt=
\int_{\R^d}\left(\partial_t v\, v_{k,n}(T)-\partial_t v\, v_{k,n}(0)\right)\,dx dt\,.
\eq
Since $\nabla v_{k,n}=\nabla v_k\ast\rho_n\to \nabla v_k$ in $L^2(\R^d)$ when $n$ go to infinity, we deduce that
\bq
\label{k,n1}
\nabla v\cdot\nabla v_{k,n}\to \nabla v\cdot\nabla v_k\quad\mbox{in}\quad L^1(\R^d)\quad (n\to\infty)\,.
\eq
Taking advantage of
\bq
\label{k,n2}
\int_{\R^d}\,|\nabla v\cdot\nabla v_{k,n}|\,dx\leq \|\nabla v\|_{L^2}^2\in L^1(0,T),
\eq
the Lebesgue theorem implies
\bq
\label{k,n3}
\int_0^T\int_{\R^d}\,\nabla v\cdot\nabla v_{k,n}\,dx dt\to \int_0^T\int_{\R^d}\,\nabla v\cdot\nabla v_k\,dx dt\quad (n\to\infty)\,.
\eq
Arguing similarly, we also obtain
\bq
\label{k,n4}
\int_0^T\int_{\R^d}\,\partial_t v\partial_t v_{k,n}\,dx dt\to \int_0^T\int_{\R^d}\,\partial_t v\partial_t v_k\,dx dt\quad (n\to\infty)\,.
\eq
Now we will show that
\bq
\label{k,n5}
\int_0^T\int_{\R^d}\,f(v) v_{k,n}\,dx dt\to \int_0^T\int_{\R^d}\,f(v) v_{k}\,dx dt\quad (n\to\infty)\,.
\eq
To do so, let us remark first that the sequence $(v_{k,n})_{n}$ converges strongly to $v_k$ in $L^2(\R^+\times\R^d)$ and
almost everywhere. Moreover
$$
|f(v) v_{k,n}|\leq 2k|f(v)|\in L^1(\R^+\times\R^d)\,.
$$
This implies the desired convergence \eqref{k,n5} thanks to Lebesgue theorem.

For the right-hand side term in \eqref{k,n} similar arguments give
\bq
\label{k,n6}
\int_{\R^d}\left(\partial_t v\, v_{k,n}(T)-\partial_t v\, v_{k,n}(0)\right)\,dx dt\to\int_{\R^d}\left(\partial_t v\, v_{k}(T)-
\partial_t v\, v_{k}(0)\right)\,dx dt\quad(n\to\infty)\,.
\eq
Letting $n\to\infty$ in \eqref{k,n} we infer
\bq
\label{k,k}
\int_0^T\int_{\R^d}\,\left(\nabla v\cdot\nabla v_{k}-\partial_t v\,\partial_t v_{k}+f(v) \,v_{k}\right)\,dx dt=
\int_{\R^d}\left(\partial_t v\, v_{k}(T)-\partial_t v\, v_{k}(0)\right)\,dx dt\,.
\eq
Our aim now is to pass to the limit in \eqref{k,k} as $k$ goes to infinity in order to obtain the desired identity
 \eqref{weak1}. To this end observe that $\nabla v_k=\beta_k'(v)\nabla v$, $\partial_t v_k=\beta_k'(v)\partial_t v$
  and $\beta_k'(s)\to 1$ as $k$ tends to infinity. It follows that
\bq
\label{k,k1}
\int_0^T\int_{\R^d}\,\left(\nabla v\cdot\nabla v_{k}-\partial_t v\,\partial_t v_{k}\right)\,dx dt\to
 \int_0^T\,\int_{\R^d}\,\left(|\nabla v|^2-|\partial_t v|^2\right)\,dx\,dt\quad(k\to\infty)\,.
\eq
In other respects, using the fact that $v_k\to v$ almost everywhere and the monotone convergence theorem, we infer
\bq
\label{k,k2}
\int_0^T\int_{\R^d}\,v_k f(v)\,dx dt\to \int_0^T\,\int_{\R^d}\,v f(v)\,dx\,dt\quad(k\to\infty)\,.
\eq
The last term in \eqref{k,k} can be handled in a similar way and we obtain finally \eqref{weak1}.
\end{proof}

 It will be useful later to remark that assumption \eqref{H21} is equivalent to the following estimate
 \bq
 \label{H11}
 F(u+w)-F(u)-f(u)w\geq -C(R)|w|^2\;\;\mbox{for all}\;\; w\;\;\mbox{whenever}\;\; |u|\leq R\,
 \eq
and that the assumption \eqref{H2} yields
 \bq
 \label{H22}
 |f(u+w)-f(u)-f'(u)w|\leq C(R)\left(|w|^2+|w|^{2^*}\right)\;\;\mbox{for all}\;\; w\;\;\mbox{whenever}\;\; |u|\leq R,
 \eq
where $C(R)>0$ is a constant only depending on $R$. Since obviously \eqref{H1} implies \eqref{H21}, we also have
 estimate \eqref{H11} under assumption \eqref{H1}. Precisely, we have
\begin{lem}
We have the following assertions:
\begin{itemize}
\item \eqref{H1}$\Longrightarrow$\eqref{H21}$\Longleftrightarrow$\eqref{H11}.
\item \eqref{H2}$\Longrightarrow$\eqref{H22}.
\end{itemize}
\end{lem}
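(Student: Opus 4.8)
The plan is to prove the four implications one at a time; three of them are immediate and the genuine content lies in $\eqref{H21}\Longrightarrow\eqref{H11}$. For $\eqref{H1}\Rightarrow\eqref{H21}$ I would argue that the sign condition $uf(u)\ge 0$ forces $f\ge 0$ on $(0,\infty)$ and $f\le 0$ on $(-\infty,0)$; since $F(0)=0$ and $F'=f$, the primitive $F$ is then nondecreasing on $[0,\infty)$ and nonincreasing on $(-\infty,0]$, so $F\ge 0$ everywhere and $\eqref{H21}$ holds with $C=0$. For $\eqref{H11}\Rightarrow\eqref{H21}$ I would simply set $u=0$ in $\eqref{H11}$: recalling that $F(0)=f(0)=0$ (as $F(0)=F'(0)=0$), it collapses to $F(w)\ge -C(0)|w|^2$, which is exactly $\eqref{H21}$.

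The core of the statement is $\eqref{H21}\Rightarrow\eqref{H11}$. Writing $g(u,w)=F(u+w)-F(u)-f(u)w$ and fixing $R>0$ with $|u|\le R$, I would distinguish two regimes according to the size of $w$. When $|w|\le 1$, Taylor's formula gives $g(u,w)=\tfrac12 f'(\xi)\,w^2$ for some $\xi$ lying between $u$ and $u+w$, hence $|\xi|\le R+1$; since $f$ is smooth, $f'$ is bounded on $[-(R+1),R+1]$ and therefore $g(u,w)\ge -C_1(R)\,|w|^2$. When $|w|>1$ the pointwise Taylor remainder is of no use, and here one invokes the global lower bound $\eqref{H21}$ directly: from $F(u+w)\ge -C|u+w|^2\ge -2CR^2-2C|w|^2$ together with the elementary bounds $|F(u)|\le M_1(R)$ and $|f(u)|\le M_2(R)$ for $|u|\le R$, and the inequality $|w|<|w|^2$, one obtains $g(u,w)\ge -C_2(R)|w|^2$. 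Taking $C(R)=\max\{C_1(R),C_2(R)\}$ then yields $\eqref{H11}$. I expect this large-$w$ regime to be the main obstacle: the second-order Taylor estimate fails because $f'$ need not be bounded below (as for $F(u)=e^{u^{2m}}-1$), and the point is that the global quadratic control $\eqref{H21}$ survives while the constant and linear-in-$|w|$ remainder terms get absorbed into $-C(R)|w|^2$ once $|w|>1$.

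For the last implication $\eqref{H2}\Rightarrow\eqref{H22}$ I would run the same small/large dichotomy on $h(u,w)=f(u+w)-f(u)-f'(u)w$. For $|w|\le 1$, Taylor's formula gives $h(u,w)=\tfrac12 f''(\eta)\,w^2$ with $|\eta|\le R+1$, so continuity of $f''$ yields $|h(u,w)|\le C(R)|w|^2$. For $|w|>1$ I would estimate termwise using $\eqref{H2}$: since $q<2^*$ and $|w|>1$, $|f(u+w)|\le C|u+w|^q\le C(R+1)^q|w|^q\le C'(R)|w|^{2^*}$, while $|f(u)|\le CR^q$ and $|f'(u)|\,|w|$ are each dominated by $C(R)|w|^{2^*}$ (using $1\le|w|\le|w|^{2^*}$ and the boundedness of $f'$ on $[-R,R]$); summing gives $|h(u,w)|\le C(R)\,|w|^{2^*}\le C(R)\bigl(|w|^2+|w|^{2^*}\bigr)$. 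In dimension $d=2$, where $2^*=+\infty$, the exponent $2^*$ in $\eqref{H22}$ should be understood with $2^*$ replaced by any fixed finite power larger than $q$, and the same argument applies verbatim.
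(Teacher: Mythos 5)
Your proof is correct and follows essentially the same strategy as the paper's: the small/large dichotomy in $|w|$, with Taylor's expansion (using boundedness of $F''$, respectively $f''$, on $[-(R+1),R+1]$) for $|w|\leq 1$, and direct absorption of the growth/lower-bound assumptions into $C(R)|w|^2$, respectively $C(R)|w|^{2^*}$, for $|w|\geq 1$. You in fact supply details the paper leaves implicit, namely the termwise estimates in the large-$|w|$ regime and the trivial converse \eqref{H11}$\Longrightarrow$\eqref{H21} obtained by taking $u=0$.
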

\begin{proof}
First let us show that \eqref{H1}$\Longrightarrow$\eqref{H21}$\Longrightarrow$\eqref{H11}.
Since the assumption \eqref{H1} implies that $F$ is always positive, the first implication follows.
To prove \eqref{H11} under \eqref{H21},
remark that, for $|w|\geq 1$ and $|u|\leq R$,
$$
f(u)w+F(u)-F(u+w)\leq C(R)|w|^2\,.
$$
For $|w|\leq 1$, we use Taylor's expansion
$$
F(u+w)-F(u)-F'(u)w=\frac{w^2}{2}F^{''}(u+\theta w),\quad 0\leq\theta\leq 1\,.
$$
Hence $F(u+w)-F(u)-F'(u)w\geq C_R |w|^2$ with $C(R)=\frac{1}{2}\,\displaystyle\inf_{|s|\leq R+1}\,F^{''}(s)$.\\

Now assume \eqref{H2}.  For $|w|\leq 1$ we obtain estimate \eqref{H22} by Taylor's expansion.
For $|w|\geq 1$ and $|u|\leq R$, we have just to remark that, for some $\eta>0$,
$$
|f(u+w)|\leq C|u+w|^{2^*-\eta}\leq C(R)|w|^{2^*}\,.
$$
\end{proof}

\begin{proof}[Proof of Theorem \ref{main-wave}] We will only consider forward time $t\geq 0$.
Set $w:=v-u$ and observe that
$$
\Box w+ f(u+w)-f(u)=0\,.
$$
Expand
\bq
\label{expan}
E(v)=E(u)+ I(t)+J(t),
\eq
where
\beqn
I(t)&=&\int_0^t\int_{\R^d}\,\Big(f(u)+f'(u) w-f(u+w)\Big) \partial_t u\,dx ds,\\\\
J(t)&=&\int_{\R^d}\,\left(\frac{1}{2}|{\mathbf D} w|^2+F(u+w)-F(u)-f(u) w\right)\,dx\,.
\eeqn
We refer to \cite{Struwe-IMRN} for a rigorous derivation of formula \eqref{expan}.
Since $E(v(t))\leq E(v(0))$ and $E(u(t))=E(u(0))$, we deduce that
\bq
\label{main1}
E(u(t))+I(t)+J(t)\leq E(u(0))\,.
\eq
Hence, by \eqref{H11}, we obtain
\bq
\label{main2}
\frac{1}{2}\int_{\R^d}\,|{\mathbf D} w(t)|^2\,dx\leq C\|w(t)\|_{L^2}^2+\int_0^t\int_{\R^d}\,\Big(f(u)+
f'(u) w-f(u+w)\Big) \partial_t u\,dx ds\,.
\eq

To conclude the proof of the theorem, we shall distinguish between two cases.
First we investigate the (simplest) case when the nonlinearity $f$
 satisfies the assumption \eqref{H2}-\eqref{H21}. As pointed out above, we have the estimate \eqref{H22}. Hence
\bq
\label{H23}
\frac{1}{2}\|{\mathbf D} w(t)\|_{L^2}^2\leq C\|w(t)\|_{L^2}^2+C\int_0^t\,\Big(\|w(s)\|_{L^2}^2+\|w(s)\|_{L^{2^*}}^2\Big)\,ds\,.
\eq
Using the fact that
\bq
\label{H24}
\|w(s)\|_{L^{2^*}}\leq C \|\nabla w(s)\|_{L^2},
\eq
and
\bq
\label{H25}
\|w(t)\|_{L^2}^2+\int_0^t\,\|w(s)\|_{L^2}^2\,ds\leq C\int_0^t\,\|{\mathbf D} w(\tau)\|_{L^2}^2\,d\tau,
\eq
we deduce that
$$
\|{\mathbf D} w(t)\|_{L^2}^2\leq C\int_0^t\,\|{\mathbf D} w(s)\|_{L^2}^2\,ds,
$$
and the conclusion follows by Gronwall's inequality. Now we consider the case when the nonlinearity $f$ satisfies
 the assumption \eqref{H1}. We shall make
 use of the following Lemma.
\begin{lem}
\label{main33}
We have
\beq
\label{main3}
\nonumber
\int_0^t\int_{\R^d}\,\Big(f(u)+f'(u) w-f(u+w)\Big) \partial_t u\,dx ds&\leq& C\int_0^t\,\|w(s)\|_{L^2}^2\,ds\\&+
&  C\int_0^t\,\int_{\R^d}\, w\left(f(u+w)-f(u)\right)\,dx\,ds.
 \eeq
 \end{lem}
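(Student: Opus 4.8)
The plan is to establish the inequality by a pointwise analysis of the integrand $\big(f(u)+f'(u)w-f(u+w)\big)\partial_t u$ and then integrate, splitting the space--time domain according to the size of $w=v-u$. The crucial structural input is that $u$ is a \emph{classical} solution with compactly supported Cauchy data: by finite speed of propagation $u(s,\cdot)$ remains supported in a fixed compact set for $s\in[0,T]$, so that $u,\partial_t u\in C$ are bounded, say $\|u\|_{L^\infty}\le R$ and $\|\partial_t u\|_{L^\infty}\le M$, with $R,M$ depending only on $u$ and $T$. Set $K:=\max(2R,1)$.

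First I would treat the region $\{|w|\le K\}$. There both $u$ and $v=u+w$ take values in the fixed bounded interval $[-(R+K),R+K]$, on which $f,f',f''$ are bounded since $f=F'$ is smooth. Taylor's formula gives $f(u)+f'(u)w-f(u+w)=-\tfrac12 f''(u+\theta w)\,w^2$ with $\theta\in[0,1]$, so after multiplying by $\partial_t u$ the integrand is $\le C(R)\,|w|^2$; hence this region contributes only a term of the form $C\int_0^t\|w\|_{L^2}^2\,ds$.

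The heart of the matter, and the main obstacle, is the region $\{|w|>K\}$, where $f(u+w)$ is completely uncontrolled because of the supercritical growth. Here I would write the integrand as $\big(f(u)+f'(u)w\big)\partial_t u-f(u+w)\,\partial_t u$. The first piece is harmless: $|f(u)|,|f'(u)|\le C(R)$ together with $|w|>1$ give $|(f(u)+f'(u)w)\partial_t u|\le C(1+|w|)\le C|w|^2$. For the second piece I would exploit the defocusing sign condition \eqref{H1}. Since $|w|>2R\ge 2|u|$, the quantity $v=u+w$ has the same sign as $w$, hence so does $f(u+w)$, and using $|w|\ge 1$ we get $w\,f(u+w)\ge |f(u+w)|\ge 0$. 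Consequently
\begin{equation*}
-f(u+w)\,\partial_t u\le M\,|f(u+w)|\le M\,w\,f(u+w)=M\,w\big(f(u+w)-f(u)\big)+M\,w\,f(u),
\end{equation*}
and since $|w\,f(u)|\le C(R)|w|\le C|w|^2$ this yields the pointwise bound $-f(u+w)\,\partial_t u\le C\,w\big(f(u+w)-f(u)\big)+C|w|^2$ on $\{|w|>K\}$. The whole point is that the sign condition converts the uncontrolled quantity $|f(u+w)|$ into the "good" term $w\big(f(u+w)-f(u)\big)$ up to a lower-order $|w|^2$ error.

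Combining the two regions gives $\int_0^t\!\int g\,\partial_t u\le C\int_0^t\|w\|_{L^2}^2\,ds+C\int_0^t\!\int_{\{|w|>K\}} w\big(f(u+w)-f(u)\big)\,dx\,ds$. The final bookkeeping step is to replace the restricted integral of the good term by the integral over all of $\R^d$; this costs only an additional $C\int_0^t\|w\|_{L^2}^2\,ds$, because on $\{|w|\le K\}$ one again has $|w(f(u+w)-f(u))|\le C|w|^2$. I expect the only delicate point beyond the sign argument to be justifying that the various pieces of $\int w(f(u+w)-f(u))$ are well defined (in $(-\infty,+\infty]$) and may be recombined; this follows from the weak-solution integrability $f(v)\in L^1$, the bound $w(f(u+w)-f(u))\ge -C|w|^2$, and ${\mathbf D}w\in L^\infty_tL^2_x$.
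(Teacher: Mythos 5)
Your proof is correct and follows essentially the same route as the paper's: Taylor expansion on the region where $|w|$ stays bounded, and on the complementary region the defocusing condition \eqref{H1} to convert the uncontrolled $|f(u+w)|$ into $w\,f(u+w)=w\left(f(u+w)-f(u)\right)+w\,f(u)$, absorbing $w f(u)$ into $C|w|^2$. The only cosmetic difference is that your threshold $K=\max(2R,1)$ makes the bad-sign region empty from the outset, whereas the paper splits at $|w|>1$ and then notes that $w$ is bounded (by $|u|\leq R$) on the set $A^{-}=\{w f(u+w)<0\}$ --- the same observation in contrapositive form.
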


Let us admit this lemma for a moment and continue the proof of Theorem \ref{main-wave}.
Plugging \eqref{main3} into \eqref{main2} we find
\beq
\label{main4}
\nonumber
\frac{1}{2}\int_{\R^d}\,|{\mathbf D} w(t)|^2\,dx&\leq& C\|w(t)\|_{L^2}^2+C\int_0^t\,\|w(s)\|_{L^2}^2\,ds\\&+
&C\int_0^t\,\int_{\R^d}\, w\left(f(u+w)-f(u)\right)\,dx\,ds.
\eeq
Applying Proposition \ref{weak} to $v$ we obtain
\beq
\label{main5}
\nonumber
\int_0^t\,\int_{\R^d}\, w\left(f(u+w)-f(u)\right)\,dx\,ds&=
&\int_0^t\,\int_{\R^d}\,\left(|\partial_t w|^2-|\nabla w|^2\right)\,dx\,ds+
\int_{\R^d}\,\partial_t w(t)\,w(t)\,dx\\&\leq&\int_0^t\,\|{\mathbf D} w(s)\|_{L^2}^2\,ds+
\frac{1}{4C}\,\|{\mathbf D} w(t)\|_{L^2}^2+C\| w(t)\|_{L^2}^2\,.
\eeq
Therefore, by \eqref{main4} we have

\beq
\label{main6}
\nonumber
\frac{1}{4}\int_{\R^d}\,|{\mathbf D} w(t)|^2\,dx&\leq&C\|w(t)\|_{L^2}^2+C\int_0^t\,\|w(s)\|_{L^2}^2\,ds\\ &+
&C\int_0^t\,\|{\mathbf D} w(s)\|_{L^2}^2\,ds\,.
\eeq
The desired conclusion follows from Gronwall's inequality.\end{proof}

It remains to prove Lemma \ref{main33}.

\begin{proof}[Proof of Lemma \ref{main33}]
Recall that
\bq
\label{main7}
I(t)=\int_0^t\int_{\R^d}\,\Big(f(u)+f'(u) w-f(u+w)\Big) \partial_t u\,dx ds,
\eq
and write
\beq
\label{main8}
\nonumber
I(t)&\leq&\int_0^t\int_{\R^d}\,\Big|\Big(f(u)+f'(u) w-f(u+w)\Big) \partial_t u\Big|{\mathbf 1}_{\{|w|>1\}}\,dx ds\\ &+
&\int_0^t\int_{\R^d}\,\Big|\Big(f(u)+f'(u) w-f(u+w)\Big) \partial_t u\Big|{\mathbf 1}_{\{|w|\leq 1\}}\,dx ds\\
\nonumber&\leq& I_1(t)+I_2(t)\,.
\eeq
Since $u$ is supported in a fixed compact subset of $[0,T]\times\R^d$, then
\bq
\label{main88}
I_2(t)\leq C\int_0^t\,\|w(s)\|_{L^2}^2\,ds,
\eq
and
\bq
\label{main9}
I_1(t)\leq C\int_0^t\,\|w(s)\|_{L^2}^2\,ds+C\int_0^t\int_{\R^d}\,\Big|f(u+w) w\Big|{\mathbf 1}_{\{|w|>1\}}\,dx ds,
\eq
with $C=C(u)$. Define
$$
A^{+}=\Big\{ (t,x);\quad w f(u+w)\geq 0\Big\},\quad A^{-}=\Big\{ (t,x);\quad w f(u+w)< 0\Big\},
$$
and write
\beqn
\int_0^t\int_{\R^d}\,\Big|f(u+w) w\Big|{\mathbf 1}_{\{|w|>1\}}\,dx ds&=
&\int_0^t\int_{\R^d}\,w f(u+w) {\mathbf 1}_{\{|w|>1\}}{\mathbf 1}_{A^{+}}\,dx ds\\ &+
&\int_0^t\int_{\R^d}\,- w f(u+w) {\mathbf 1}_{\{|w|>1\}}{\mathbf 1}_{A^{-}}\,dx ds\,.
\eeqn
Observe that $w$ is bounded on the set $A^{-}$. Therefore, for $|w|\geq 1$, we obtain
\beqn
-w f(u+w)&=&w f(u+w)-2w f(u+w)\\
&=&w\left(f(u+w)-f(u)\right)+w f(u)-2w f(u+w)\\
&\leq& w f(u+w)+C |w|^2\,.
\eeqn
This gives
\bq
\label{K1}
I_1(t)\leq C\int_0^t\,\|w(s)\|_{L^2}^2\,ds+\int_0^t\,\int_{\R^d}\,w\left(f(u+w)-f(u)\right)\,dx ds\,.
\eq
Plugging Estimates \eqref{main8}, \eqref{main88} and \eqref{K1} together we find \eqref{main3} as desired.\end{proof}


\section{Nonlinear Schr\"odinger Equations}

In this section we consider the initial value problem for the semilinear Schr\"odinger equation
\bq
\label{nls}
i\partial_t u -\Delta u+f(u)=0 \quad\mbox{on}\quad \R\times\R^d,\;\;d\geq 2
\eq
with Cauchy data
\bq
\label{nlsCD}
u(0,\cdot)=u_0\in {\mathcal D}(\R^d)\,.
\eq
We assume that the nonlinearity $f$ takes the form $f(u)=u F'(|u|^2/2)$ for some smooth function
$F:\R_+\to\R$ satisfying either
\bq
\label{coercive}
0\leq \sqrt{s}\,F'(s)\leq C F(s),\;\;\;\forall\;\;\;s\geq 0,
\eq
or
\bq
\label{NLS1}
|f(u)|\,\leq\,C|u|^q\quad\mbox{for some}\quad q<2^*:=\frac{2d}{d-2},
\eq
and
\bq
\label{NLS2}
F\left(\frac{|u|^2}{2}\right)\geq -C|u|^2\,.
\eq
Note that the growth \eqref{coercive} is not allowed by the assumption $(3.3)$ in \cite{Struwe-IMRN}.
For example, the function $F(s):={\rm e}^{\sqrt{1+2s}}-{\rm e}$ satisfies \eqref{coercive} but not $(3.3)$ in \cite{Struwe-IMRN} which only allows polynomial growth.\\

 Solutions of \eqref{nls} formally satisfy the conservation of mass and Hamiltonian
\bq
\label{mass}
\|u(t)\|_{L^2}=\|u(0)\|_{L^2},
\eq

\bq
\label{Ham} H(u(t)):=\left\|\nabla u(t)\right\|_{L^2}^2+\int_{\R^d}\, F(|u(t)|^2/2)\,dx=H(u(0))\;.
\eq
In contrast to the case of wave equation where the finite speed of propagation is available, classical solutions to
\eqref{nls} with data having compact support need no longer have spatially compact support. Nevertheless, it is possible to construct solutions that are bounded
 in any Sobolev space $H^s$ (see Strauss \cite{Strauss}). \\

As for nonlinear wave equation, for the supercritical growth, it is not known if the Cauchy problem
\eqref{nls}-\eqref{nlsCD} admit global smooth solutions.
However, assuming \eqref{coercive} or \eqref{NLS1}-\eqref{NLS2}, we can construct a global weak
solution $u$ (as in Definition \ref{solution}) with $\nabla u \in L^\infty(\R, L^2(\R^d))$, $F(|u|^2/2)\in L^\infty(\R, L^1(\R^d))$,
satisfying \eqref{nls} in the sense of distributions and the energy inequality
\bq
\label{energy2}
E(u(t))\leq E(u(0)),\quad \forall\;\;t\in\R\,.
\eq
The proof can be done in a similar way as in Strauss \cite{Strauss} (Theorem 3.1).
As for wave equation, this implies that the map $t\longmapsto \nabla u(t)\in L^2(\R^d)$ is weakly continuous. Note that there are almost parallel stories for the nonlinear Schr\"odinger equations (see \cite{CIMM} and references therein).\\

Our uniqueness result can be stated as follows.
\begin{thm}
\label{mainNLS}
Suppose $u\in \displaystyle\bigcap_{s\geq 0}\,L^\infty([-T,T], H^s(\R^d))$ solves \eqref{nls}-\eqref{nlsCD},
where $f$ satisfies \eqref{coercive} or \eqref{NLS1}-\eqref{NLS2}. Also, let $v$ be a weak solution to \eqref{nls}-\eqref{nlsCD}, satisfying the energy inequality
\eqref{energy2}. Then $u=v$ on $[-T,T]$.
\end{thm}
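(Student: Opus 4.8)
The plan is to transpose the argument for Theorem~\ref{main-wave} to the Schr\"odinger setting. Set $w:=v-u$, which solves, weakly,
\[
i\partial_t w-\Delta w+\bigl(f(u+w)-f(u)\bigr)=0,\qquad w(0,\cdot)=0,
\]
the vanishing datum being forced by the common $u_0$. Since $u\in\bigcap_{s\ge0}L^\infty([-T,T],H^s)$, both $u$ and $\partial_t u=i\bigl(\Delta u-f(u)\bigr)$ are bounded on $[-T,T]\times\R^d$, so $|u|\le R$ for some fixed $R$. The two quantities under a priori control are the mass and the Hamiltonian $H$, and I would extract one differential inequality from each, then close by Gronwall on $Y(t):=\|w(t)\|_{L^2}^2+\|\nabla w(t)\|_{L^2}^2$. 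Throughout, the gauge structure $f(u)=uF'(|u|^2/2)$ is what makes these balances usable.

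First I would establish the mass identity by testing the $w$-equation against $\bar w$ --- rigorously, via the truncation and mollification used in the proof of Proposition~\ref{weak} --- and taking imaginary parts; with $w(0)=0$ this gives
\[
\tfrac12\|w(t)\|_{L^2}^2=-\int_0^t\mathrm{Im}\int_{\R^d}\bigl(f(u+w)-f(u)\bigr)\bar w\,dx\,ds.
\]
Because $f(u+w)\bar w=u\bar w\,F'(|v|^2/2)+|w|^2F'(|v|^2/2)$ and the last term is real, only $\mathrm{Im}\int u\bar w\,[F'(|v|^2/2)-F'(|u|^2/2)]$ survives. On $\{|w|\le1\}$ a mean-value bound yields $O(\|w\|_{L^2}^2)$, since $F''$ is bounded for $|v|\le R+1$. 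On $\{|w|>1\}$ the factor $|u|\le R$ combined with the coercivity \eqref{coercive} in the form $\sqrt sF'(s)\le CF(s)$ converts $|v|F'(|v|^2/2)$ into $CF(|v|^2/2)$; this is the step where the supercritical growth is controlled by the sign of $F$ rather than by an (unavailable) Taylor expansion.

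Next, from the energy hypothesis \eqref{energy2} and $H(u(t))=H(u(0))=H(v(0))$ I get $H(v(t))-H(u(t))\le0$. Expanding the left-hand side around $u$ produces $\|\nabla w\|_{L^2}^2$, the potential Bregman remainder $\int\bigl[F(|v|^2/2)-F(|u|^2/2)-F'(|u|^2/2)\mathrm{Re}(\bar uw)\bigr]$, and terms linear in $w$. The Bregman remainder satisfies the Schr\"odinger analogue of \eqref{H11}, being $\ge-C(R)\|w\|_{L^2}^2$ (Taylor for $|w|\le1$; for $|w|>1$ one uses $F\ge0$ in the coercive case and \eqref{NLS2} in the oscillating case). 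The linear-in-$w$ terms, however, do not vanish pointwise: exactly as in the derivation of \eqref{expan} for the wave equation, they cancel only after one integration in time, using the equation for $u$ and $w(0)=0$. I would therefore carry out (or invoke) the corresponding exact expansion $H(v)=H(u)+\tilde I(t)+\tilde J(t)$, with $\tilde J$ the coercive quadratic part above and $\tilde I$ a time-integrated remainder governed by $f(u+w)-f(u)-f'(u)w$, rather than try to bound the linear terms directly; securing this bookkeeping is the first delicate point.

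It then remains to control $\tilde I$. In the oscillating case \eqref{NLS1}--\eqref{NLS2} the remainder obeys the analogue of \eqref{H22}, $|f(u+w)-f(u)-f'(u)w|\le C(R)\bigl(|w|^2+|w|^{2^*}\bigr)$, so that, after pairing with the bounded factor coming from $u$ and using the Sobolev embedding $\|w\|_{L^{2^*}}\le C\|\nabla w\|_{L^2}$, one gets $\tilde I(t)\le C\int_0^tY(s)\,ds$; together with the mass identity this yields $Y(t)\le C\int_0^tY(s)\,ds$, and Gronwall forces $Y\equiv0$. The coercive case \eqref{coercive} is where I expect the genuine difficulty. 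Here no expansion of $f$ is allowed, and the clean weak identity available for waves (Proposition~\ref{weak}) has no exact Schr\"odinger counterpart, because testing the $w$-equation against $\partial_t\bar w$ would require time-regularity a weak solution need not have. The surviving term $\int_{|w|>1}F(|v|^2/2)$ produced in the mass balance is not itself $O(\|w\|_{L^2}^2)$; the plan is to notice that it reappears with the favourable sign inside the potential Bregman remainder, so that adding the mass and energy balances lets the inequality $H(v)\le H(u)$ absorb it, leaving only $O(\|w\|_{L^2}^2)$, $\|\nabla w\|_{L^2}^2$, and the time-integrated remainder. Making this cross-absorption quantitative --- splitting $\{|w|>1\}$ by the sign of $\mathrm{Re}\,[\bar w(f(u+w)-f(u))]$ and trading the unfavourable sign as in Lemma~\ref{main33}, all without any control on $\partial_t w$ --- is the main obstacle; once it gives $\tilde I(t)\le C\int_0^tY(s)\,ds$, Gronwall closes the proof.
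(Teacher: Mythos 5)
Your treatment of the oscillating case \eqref{NLS1}--\eqref{NLS2} is essentially the paper's proof: the expansion $E(v)=E(u)+I+J$, the complex analogues of \eqref{H11} and \eqref{H22}, and the mass balance exploiting the gauge cancellation (the fact that $f(u+w)\overline{(u+w)}$ is real, the paper's \eqref{Gronw3}--\eqref{Gronw4}) are exactly the ingredients used there before closing with Gronwall. The genuine gap is in the coercive case \eqref{coercive}, which you correctly identify as the crux but leave unresolved --- and the route you sketch for it would fail. First, your target bound $\tilde I(t)\le C\int_0^t Y(s)\,ds$ with $Y=\|w\|_{L^2}^2+\|\nabla w\|_{L^2}^2$ is unattainable: under \eqref{coercive} the nonlinearity may grow like ${\rm e}^{|u+w|}$, so neither $\tilde I$ nor the term $\int_{\{|w|\ge 1\}}F(|u+w|^2/2)\,dx$ produced by the mass balance is controlled by any expression built from $\|w\|_{L^2}$ and $\|\nabla w\|_{L^2}$; that is precisely what supercritical means here. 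Second, the device you invoke --- splitting $\{|w|>1\}$ by the sign of $w\cdot\left(f(u+w)-f(u)\right)$ as in Lemma~\ref{main33} --- is specific to the wave equation: that lemma trades the unfavourable sign for the quantity $\int w\left(f(u+w)-f(u)\right)$, which is then evaluated through the weak identity of Proposition~\ref{weak}; as you yourself observe, no such identity is available for Schr\"odinger, so this trade has nowhere to go.

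The paper's resolution is different: it enlarges the Gronwall functional so that the potential-energy excess is itself part of the unknown, instead of trying to dominate it by Sobolev norms. Writing $B:=F(|u+w|^2/2)-F(|u|^2/2)-f(u)\cdot w$, one chooses $A=A(u)$ so large that $(A+1)|w|^2+B\ge 0$ pointwise (condition \eqref{A}). Since $|f(u+w)|\le C F(|u+w|^2/2)$ by \eqref{coercive1}, and $F(|u+w|^2/2)=B+F(|u|^2/2)+f(u)\cdot w\le B+A|w|^2$ on $\{|w|\ge 1\}$, both the right-hand side of the mass balance and the term $I(t)$ are bounded by $C(A+1)\int_0^t\|w(s)\|_{L^2}^2\,ds+C\int_0^t\int\left((A+1)|w|^2+B\right)dx\,ds$ (the paper's \eqref{L2} and \eqref{Last}). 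Adding $2(A+1)$ times the mass inequality to the energy inequality \eqref{Gronw0} makes the left-hand side dominate $G(t):=(A+1)\|w(t)\|_{L^2}^2+\int\left((A+1)|w(t)|^2+B(t)\right)dx\ge 0$, while the right-hand side is $\le C\int_0^t G(s)\,ds$; Gronwall applied to $G$ gives $G\equiv 0$, hence $w\equiv 0$. This choice of $A$ and of the functional $G$ --- not a pointwise sign decomposition --- is the missing idea in your proposal.
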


\begin{proof}
We argue exactly as for wave equation and we only consider forward time $t\geq 0$. Denoting by $w=v-u$, then
\bq
\label{diffe}
{\rm i}\partial_t w-\Delta w+f(u+w)-f(u)=0\quad \mbox{on}\quad \R\times \R^d\,.
\eq
Expanding as in the second section, we find
\bq
\label{expand}
E(v(t))=E(u(t))+I(t)+J(t),
\eq
where
\begin{eqnarray*}
I(t)&=&-\int_0^t\int_{\R^d}\,(f(u+w)-f(u)-Df(u)w)\cdot \partial_t u\,dx\,ds,\\
J(t)&=&\frac{1}{2}\int_{\R^d}\,|\nabla w|^2\,dx+\int_{\R^d}\,\left(F(|u+w|^2/2)-F(|u|^2/2)-f(u)\cdot w\right)\,dx.
\end{eqnarray*}
Here and after we denote by
$$
a\cdot b={\mathcal Re}(a{\bar b})\,.
$$
Since $E(v(t))\leq E(v(0))$, $E(u(t))=E(u(0))$, we deduce that
\begin{eqnarray}
\label{Gronw0}
\nonumber
\frac{1}{2}\int_{\R^d}\,|\nabla w|^2\,dx&\leq& \int_0^t\int_{\R^d}\,(f(u+w)-f(u)-Df(u)w)\cdot \partial_t u\,dx\,ds\\ &+&
\int_{\R^d}\,\left(F(|u|^2/2)+f(u)\cdot w-F(|u+w|^2/2)\right)\,dx.
\end{eqnarray}
According to the assumptions on the nonlinearity $f$, we shall distingush between two cases.
Let us suppose first that $f$ satisfies \eqref{NLS1}-\eqref{NLS2}. Then, using the analogous of \eqref{H11} for
the complex setting, we deduce that
\bq
\label{Gronw1}
\frac{1}{2}\int_{\R^d}\,|\nabla w|^2\,dx\leq C\int_{\R^d}\,|w|^2\,dx+
\int_0^t\int_{\R^d}\,(f(u+w)-f(u)-Df(u)w)\cdot \partial_t u\,dx\,ds\,.
\eq
Note that we have the analogous to \eqref{H22} for the complex setting, namely
\bq
\label{H222}
|f(u+w)-f(u)-Df(u)w|\leq C(R)\left(|w|^2+|w|^{2^*}\right)\;\;\mbox{for all}\;\; w\;\;\mbox{whenever}\;\; |u|\leq R\,.
\eq
Hence
\bq
\label{Gronw2}
\frac{1}{2}\| \nabla w(t)\|_{L^2}^2\leq C \|w(t)\|_{L^2}^2+
C\int_0^t\,\left( \|\nabla w(s)\|_{L^2}^2+ \|w(s)\|_{L^2}^2\right)\,ds\,.
\eq
To conclude the proof in this case it remains to treat carefully the term $\|w(t)\|_{L^2}^2$.
Recall that
\bq
\label{Gronw3}
\frac{d}{dt}\| w\|_{L^2}^2=2\int_{\R^d}\, (f(u)-f(u+w))\cdot ({\rm i}w)\,dx\,.
\eq
To bound the right hand term in \eqref{Gronw3}, we will use an extra cancellation. Remark that for $|w|\leq 1$ we
have $|(f(u)-f(u+w))\cdot ({\rm i}w)|\leq C|w|^2$. For $|w|\geq 1$, write
\bq
\label{Gronw4}
(f(u)-f(u+w))\cdot ({\rm i}w)=f(u)\cdot({\rm i}w)+f(u+w)\cdot ({\rm i}u),
\eq
where we have used the fact that $f(u+w)\overline{u+w}\in\R$. It follows that, for $|w|\geq 1$,
\bq
\label{Gronw5}
|(f(u)-f(u+w))\cdot ({\rm i}w)|\leq C |w|^{2^*},
\eq
and finally
\bq
\label{Gronw6}
|(f(u)-f(u+w))\cdot ({\rm i}w)|\leq C(R)\left(|w|^2+|w|^{2^*}\right)\;\;
\mbox{for all}\;\; w\;\;\mbox{whenever}\;\; |u|\leq R\,.
\eq
Combining \eqref{Gronw3} and \eqref{Gronw6}, we get
\bq
\label{Gronw7}
\|w(t)\|_{L^2}^2\leq C \int_0^t\,\left( \|\nabla w(s)\|_{L^2}^2+ \|w(s)\|_{L^2}^2\right)\,ds\,.
\eq
Adding \eqref{Gronw2}+2C\eqref{Gronw7}, we conclude by applying Gronwall's inequality.\\

Assume now that $f$ satisfies \eqref{coercive}. Hence,
\bq
\label{coercive1}
|f(u)|\leq C F(|u|^2/2).
\eq

\noindent{\bf Claim.} We have
\begin{eqnarray}
\nonumber
\|w(t)\|_{L^2}^2&\leq& C(A+1) \int_0^t\, \|w(s)\|_{L^2}^2\,ds\\ \label{L2} &+&C\int_0^t\int_{\R^d}\Big((A+1)|w|^2+F(|u+w|^2/2)-F(|u|^2/2)-f(u)\cdot w\Big)\,dx\,ds\;.
\end{eqnarray}
for some constants $C=C(u)>0$ and $A>0$ such that
\bq
\label{A}
F(|u+w|^2/2)-F(|u|^2/2)-f(u)\cdot w +(A+1)|w|^2\geq 0,\;\;\;\forall \;\;\;u,w\,.
\eq
Indeed, recalling \eqref{Gronw3} and using the the extra cancellation \eqref{Gronw4}, we get
\beqn
\frac{d}{dt}\| w\|_{L^2}^2&=&2\int_{\R^d}\, (f(u)-f(u+w))\cdot ({\rm i}w)\,dx\\
&=&2\int_{\{|w|<1\}}\, (f(u)-f(u+w))\cdot ({\rm i}w)\,dx+2\int_{\{|w|\geq 1\}}\,\Big( f(u)\cdot ({\rm i}w)+f(u+w)\cdot ({\rm i}u)\Big)\,dx\\
&\leq&C\|w(t)\|_{L^2}^2+C\int_{\{|w|\geq 1\}}\,F(|u+w|^2/2)\,dx.
\eeqn
Upon writting
\beqn
\int_{\{|w|\geq 1\}}\,F(|u+w|^2/2)\,dx&=&\int_{\{|w|\geq 1\}}\,\Big(F(|u+w|^2/2)-F(|u|^2/2)-f(u)\cdot w\Big)\,dx\\&+&
\int_{\{|w|\geq 1\}}\,\Big(F(|u|^2/2)+f(u)\Big)\,dx
\eeqn and observing that the last intgral is less than $A\| w\|_{L^2}^2$ for some $A=A(u)>0$ which can
be taken large enough so that \eqref{A} holds, we obtain \eqref{L2}. Arguing exactly as above, we infer
\begin{eqnarray}
\nonumber
\int_{\R^d}\,\Big(f(u+w)&-&f(u)-Df(u)w\Big)\cdot \partial_t u\,dx\leq\tilde{C}(A+1) \int_0^t\, \|w(s)\|_{L^2}^2\,ds\\ \label{Last} &+&\tilde{C}\int_0^t\int_{\R^d}\Big((A+1)|w|^2+F(|u+w|^2/2)-F(|u|^2/2)-f(u)\cdot w\Big)\,dx\,ds\;.
\end{eqnarray}
Adding \eqref{Gronw0}+2(A+1)\eqref{L2}, we obtain
\begin{eqnarray*}
\hspace{-0.5cm}\frac{1}{2}\|\nabla w(t)\|_{L^2}^2+(A+1)\|w(t)\|_{L^2}^2+\int\left((A+1)|w|^2+F(|u+w|^2/2)-F(|u|^2/2)-f(u)\cdot w\right)dx&\leq&\\C\int_0^t\,(A+1)\|w(s)\|_{L^2}^2\,ds+C\int_0^t\int_{\R^d}\Big((A+1)|w|^2+F(|u+w|^2/2)-F(|u|^2/2)-f(u)\cdot w\Big)\,dx\,ds\,.
\end{eqnarray*}
We conclude by applying Gronwall's inequality.
\end{proof}

\section{Appendix}

We give here a proof of global existence of weak solutions to \eqref{wave1}-\eqref{CD} under the
assumptions \eqref{H2}-\eqref{H21}. As in \cite{Stru}, by assumption \eqref{H21} there exist sequences
$r_k^{\pm}\to\pm\infty$ such that
$$
r_k^{\pm}f(r_k^{\pm})\geq -C|r_k^{\pm}|^2.
$$
We approximate $f$ by Lipschitz functions
\begin{eqnarray*}
 f_k(u)&=&\; \left\{
\begin{array}{cllll}f(r_k^-) \quad&\mbox{if}&\quad
u<r_k^-,\\\\ f(u) \quad
&\mbox{if}&\quad r_k^-\leq u\leq r_k^+,\\\\
f(r_k^+)\quad&\mbox{if}&\quad u>r_k^+.
\end{array}
\right.
\end{eqnarray*}
Clearly the primitive $F_k(u)=\dint_0^u\,f_k(v)\,dv$ of $f_k$ satisfies
\bq
\label{lower}
F_k(u)\geq -C|u|^2.
\eq
Next, we consider the following approximate Cauchy problems
\bq
\label{approx1}
\partial_t^2 v^k-\Delta\,v^k+f_k(v^k)=0,\quad \left(v^k(0,\cdot),\partial_t v^k(0,\cdot)\right)\in {\mathcal D}(\R^d)^2\,.
\eq
Classical existence theory tell us that \eqref{approx1} has a global classical solution $v^k$.
Moreover, by the assumption \eqref{H21} the sequence $(v^k)$ is bounded in the energy norm.
Hence $(v^k)$ converges strongly to some $v$ in $L^2(Q)$ for any compact space-time region $Q\subset \R\times\R^d$.
 This in particular implies that $v^k\to v$ and $f_k(v^k)\to f(v)$ almost everywhere.
  To conclude that $v$ solves \eqref{wave1}-\eqref{CD} in the sense of distribution it remains to prove that
$$
f_k(v^k)\to f(v)\quad\mbox{in}\quad L^1_{loc}(\R\times\R^d)\,.
$$
This will be done via Vitali's theorem. It suffices to show that, for any $\varepsilon>0$, there exists $\delta>0$ such that
$$
|E|<\delta\Longrightarrow\int_{E}\,|f_k(v^k)|\,dx\,dt<\varepsilon,
$$
where $|E|$ denotes the Lebesgue measure of the set $E\subset\R\times\R^d$.
Using assumption \eqref{H2} and H\"older inequality, we get
\begin{eqnarray*}
\int_{E}\,|f_k(v^k)|\,dx\,dt&\leq &C\int_{E}\,|v^k|^{2^*-\eta}\,dx\,dt\\
&\leq &C\left(\int_{E}\,|v^k|^{2^*}\,dx\,dt\right)^{1-\eta/{2^*}}\,|E|^{\eta/{2^*}}\\
&\leq&C|E|^{\eta/{2^*}},
\end{eqnarray*}
and the conclusion follows. Finally, the energy inequality follows from \eqref{lower} and Fatou's lemma.

\end{document}